\definecolor{cadmiumgreen}{rgb}{0.0, 0.42, 0.24}
\newtheorem{thm}{Theorem}
\newtheorem{prop}{Proposition}[section]
\newtheorem{lem}[prop]{Lemma}
\newtheorem{rmk}[prop]{Remark}
\newcommand{\eps}{\varepsilon}
\newcommand{\R}{\mathbb{R}}
\newcommand{\N}{\mathbb{N}}
\newcommand{\Z}{\mathbb{Z}}
\newcommand{\dt}{\delta}
\newcommand{\bl}{\mathbf{l}}
\newcommand{\bi}{\mathbf{i}}
\newcommand{\bj}{\mathbf{j}}
\newcommand{\be}{\begin{equation}} 
\newcommand{\ee}{\end{equation}}
\newcommand{\bea}{\begin{eqnarray}} 
\newcommand{\eea}{\end{eqnarray}}
\newcommand{\bean}{\begin{eqnarray*}} 
\newcommand{\eean}{\end{eqnarray*}}
\def\dx{\,{\rm d}x}
\def\dt{\,{\rm d}t}
\journalname{Journal of Dynamics and Differential Equations}
\begin{document}

\titlerunning{Sharp Sobolev estimates for an aggregation--diffusion equation}
\title{Sharp Sobolev estimates for concentration of solutions to an aggregation--diffusion equation}

\dedication{In memory of Genevi{\`e}ve Raugel, colleague and mentor for many mathematicians in dynamics, PDEs and numerical analysis
}


\author{Piotr Biler \and
        Alexandre Boritchev \and
				Grzegorz Karch \and
				Philippe Lauren{\c c}ot  
}

\institute{P.~Biler \at
              Instytut Matematyczny, Uniwersytet Wroc{\l}awski, pl. Grunwaldzki 2/4, 50-384  Wroc{\l}aw, Poland, 
\email{piotr.biler@uwr.edu.pl}         
           \and
           A.~Boritchev \at Universit\'e Claude Bernard -- Lyon 1, CNRS UMR 5208, Institut Camille Jordan, F-69622 Villeurbanne, France,
\email{alexandre.boritchev@gmail.com}
					\and
           G.~Karch \at Instytut Matematyczny, Uniwersytet Wroc{\l}awski, pl. Grunwaldzki 2/4, 50-384 Wroc{\l}aw, Poland,
\email{grzegorz.karch@uwr.edu.pl}
					\and
           P.~Lauren{\c c}ot \at Institut de Math\'ematiques de Toulouse, UMR 5219, Universit\'e de Toulouse, CNRS, F-31062, Toulouse Cedex 9,
 France, \email{Philippe.Laurencot@math.univ-toulouse.fr}
}

\date{Received: date / Accepted: date}

\maketitle

\begin{abstract} 
We consider the  drift-diffusion equation
$$
u_t-\eps\Delta u+\nabla\cdot(u\ \nabla K\ast u)=0
$$
 in the whole space with global-in-time solutions bounded in all Sobolev spaces; for simplicity, we restrict ourselves to the model case $K(x)=-|x|$.
\\
We quantify the mass concentration phenomenon, a genuinely nonlinear effect, for radially symmetric solutions of  this equation  for small diffusivity $\eps$ studied in our previous paper \cite{BBKL20}, obtaining optimal sharp upper and lower bounds for Sobolev norms. 
\keywords{nonlocal drift-diffusion equation; small diffusivity; concentration of solutions; Sobolev norms}
\subclass{35Q92; 35K55; 35B36; 35B45}
\end{abstract}

\section{Introduction}

We study the nonlinear nonlocal equation 
\begin{equation} \label{ADE}
u_t-\eps\Delta u+\nabla\cdot(u\ \nabla K\ast u)=0, \qquad x\in \mathbb{R}^N, \;t>0, 
\end{equation}
where $\eps>0$ is the diffusivity. We consider the simplest case of a \textit{pointy potential}. In other words, to clarify the presentation we restrict ourselves to the radially symmetric kernel $K(x)=-|x|$ which has a mild singularity at the origin. Equation~\eqref{ADE} belongs to a class of models describing numerous phenomena from biology and astrophysics; see the review \cite{HP09} and \cite[Introduction]{B-DeG} for further references.
\\ \indent
We make the following assumptions on the initial condition $u(\cdot,0) \equiv u_0$:
\begin{enumerate}[(A)]
\item \label{A1}  The function $u_0$ is  $C^\infty$-smooth, bounded and integrable along with all its derivatives. In other words, 
$$
u(\cdot,0) \equiv u_0 \in \bigcap_{k \ge 0,\ 1 \le p \le \infty} W^{k,p}(\mathbb{R}^N),
$$
where $W^{k,p}(\mathbb{R}^N)$ are the usual Sobolev spaces (see Section~\ref{nota}).
\smallskip
\item \label{A2} The function $u_0$ is non-negative and radially symmetric.
\smallskip
\item \label{A3}  The mass of $u_0$ is sufficiently concentrated: 
\begin{equation} \label{u0conc}
\int_{\R^n}{|x| u_0(x) dx} < \infty.
\end{equation}
\end{enumerate}
\indent
Since \eqref{ADE} is globally well-posed in any space $W^{k,1}(\mathbb{R}^N)$ for $k \in \N$, see \cite{LR10}, using Sobolev embeddings, it follows that the solutions $u$ to \eqref{ADE} belong to $C([0,\infty),W^{k,p}(\mathbb{R}^N))$ for all $k \ge 0$ and $p\in [1,\infty]$. Also, $u(\cdot,t)$ remains non-negative and radially symmetric for all $t \ge 0$, and moreover we also have the mass conservation property, see \eqref{M} below. For more details on the well-posedness and regularity issues for \eqref{ADE}, we refer to \cite{KS11,LR10}.
\bigskip
\\ \indent
In the limit case $\eps=0$, the solution to \eqref{ADE} blows up after a finite time, provided the initial condition is sufficiently concentrated in a neighbourhood of the origin \cite{BKL09}. For more results about blow-up depending on the choice of the kernel $K$, see \cite{BRB11},\cite{BW99},\cite{CDFLS11},\cite{CJLV16},\cite{KS11},\cite{LS18}. For a more comprehensive review of the results and open problems, see the very recent book of the first author \cite{B-DeG}, especially Chapter 5, Section 4.
\\ \indent
In our work, we are concerned with the behaviour of solutions to \eqref{ADE} for $0 < \eps \ll 1$. In our previous paper \cite{BBKL20}, we obtained optimal estimates for Lebesgue norms of $u$. Heuristically, after the solution is allowed enough time to concentrate in a neighbourhood of zero, the behaviour of the (time-averaged) Lebesgue norms of $u$ is given by $\Vert u \Vert_p \sim \eps^{-N(1-1/p)}$.
\\ \indent
More rigorously, we proved that there are constants $\eps_*,\ T_*>0$, which only depend on the solution through the total mass and an additional parameter, such that for $0<\eps \le \eps_*$,
\begin{equation} \label{Lpstat}
\int_{0}^{T_*} \Big( \int_{\R^N} {u^p(x,t)\ dx} \Big)^{1/p}\ dt \sim \eps^{-N(1-1/p)},\ 1 \le p < \infty;
\end{equation}
moreover, this result remains true if we integrate over $x$ in a ball of radius $C \eps$ instead of the whole space. For the precise formulation, see Theorem~2.3, Corollary~2.4 and Lemma~4.1 in \cite{BBKL20}.
\\ \indent
To understand better small-scale behaviour of the solutions, it is relevant to look at norms beyond the Lebesgue setting. The Sobolev norms - which are natural candidates - have attracted much attention in models with physical motivation. 
Namely, in the pioneering works of Kuksin \cite{Kuk97} and \cite{Kuk99}, upper and lower estimates of these norms for solutions of the nonlinear Schr{\"o}dinger equation (with or without a random term) in a small dispersion regime have been obtained. After these seminal papers, study of the Sobolev norms in dispersive equations has become a very important field (see for example the paper \cite{CKSTT10} and the references therein). 
\\ \indent
Denoting by $\left\langle \cdot \right\rangle$ a time-average, dimensional analysis tells us that quantities of the type 
\begin{equation} \label{char}
\frac{\left\langle \Vert u \Vert_{\dot{H^m}} \right\rangle}{\left\langle \Vert u \Vert_{\dot{H}^{m+1}} \right\rangle},\qquad m \ge 0,
\end{equation}
(see Section~\ref{nota} for the notation) provide a \textit{characteristic length scale} of the solution. For a discussion, see the already mentioned papers of Kuksin \cite{Kuk97} and \cite{Kuk99},  as well as \cite[Chapter~6]{BoKu}. 
\\ \indent
The main results of our paper, Theorem~\ref{Hmupper} and Theorem~\ref{Hmlower}, state that, for the same $\eps_{*}$ and $T_{*}$ as in the statement of \eqref{Lpstat}, provided $0 \le \eps \le \eps_*$, we have
\begin{equation} \label{Hkstat}
\int_{0}^{T_*}  \Vert u(t) \Vert_{\dot{H^m}}\ dt \sim \eps^{-(2m+N)/2},\qquad m \in \N.
\end{equation}
\indent
Consequently, up to averaging in time, all the quantities given by \eqref{char} are of order $\varepsilon$, as is the radius of the balls on which at least an $\eps$-independent proportion of  mass is concentrated. To the best of our knowledge, our paper is the first one which studies systematically models from mathematical biology using all-order Sobolev norms.
\\ \indent
Moreover, our results for these norms - and therefore for the length scale - are sharp. Indeed, the upper and lower estimates only differ by a multiplicative constant which only depends on the initial condition through a finite number of parameters. This is a remarkable phenomenon, only previously observed in the Burgers equation and its generalisations. For more complex PDEs such as the 2D Navier--Stokes or the nonlinear Schr{\"o}dinger equation, such results are beyond the reach of today's mathematics.
\bigskip
\\ \indent
Our results are indeed similar to those obtained for the simpler Burgers equation and its fractional-dissipation and multidimensional analogues by the second author \cite{B14-ARMA,B14,B16ab,B18}. These papers were themselves inspired by the ideas and first results due to Biryuk \cite{Bir01}. Indeed, for Burgers-type equations the length scale is again the small parameter $\eps$, and we have sharp Sobolev norm estimates. More precisely, up to a rescaling factor corresponding to the dimension $N$, $-u$ has the same behaviour with respect to Lebesgue and Sobolev norms as the derivative of a Burgers solution.  In particular, the positivity of $u$ seems to play a role analogous to that of Oleinik's upper bound on the positive part of the gradient for a solution of the Burgers equation. Heuristically, it seems that the rescaling $N$-dependent factor in the power of $\eps$ is due to a difference of geometry of the singular zones in the limit $\eps \rightarrow 0$. Indeed, for \eqref{ADE} regions where the inviscid solution is not regular are of dimension zero (only the origin) and not shocks of codimension one as for the generalised Burgers equation. 
\\ \indent
Our methodology is essentially a combination of the approach used by the second author to study Sobolev norms in the papers cited above and of the arguments used by the three other authors to prove explosion under the concentration assumption in the paper \cite{BKL09} (see also \cite{KS10}). The most delicate issue is to estimate the contribution of the nonlinearity in the energy estimates, which requires a subtle analysis of the convolution term using the classical Hardy--Littlewood--Sobolev inequality along with the Gagliardo--Nirenberg inequality within the admissible ranges for the exponents.

\section{Notation, functional spaces and inequalities} \label{nota}

We denote by $M$ the total mass and recall that it is conserved by the flow of the equation \eqref{ADE}:
\begin{equation} \label{M}
\int_{\R^n}{u(x,t) dx}=M:=\int_{\R^n}{u_0(x) dx}.
\end{equation}
\\ \indent
For multiindices $\bi,\bj \in \Z_+^N$, provided $i_k \leq j_k,\ 1 \leq k \leq N$ (which we denote as $\bi \leq \bj$), we use the generalised binomial coefficient notation
$$
\binom{\bj}{\bi}=\prod_{k=1}^{N}{\binom{j_k}{i_k}}.
$$  
\\ \indent
For $N=1$ and a positive integer $k$, $u^{(k)}$ denotes the $k$-th spatial derivative of $u$,  while we use the notation $\partial_{\bi} u := \partial_{x_1}^{i_1}\ldots \partial_{x_N}^{i_N} u$ when $N > 1$ and $\bi=(i_k)_{1\le k\le N}$ is a multiindex.
\\ \indent
For $m \ge 0$ and $p\in [1,\infty]$, we will consider Lebesgue spaces $L^p(\mathbb{R}^N)$ and Sobolev spaces $W^{m,p}(\mathbb{R}^N)$. The Lebesgue norms will be denoted $\Vert \cdot \Vert_p$.  As usual, we set $H^m(\mathbb{R}^N)=W^{m,2}(\mathbb{R}^N)$, $m \in \N$. For $m\in \N$ and $p\in [1,\infty]$, we denote the homogeneous seminorm in $W^{m,p}(\mathbb{R}^N)$ by 
$$
\| u \|_{\dot{W}^{m,p}} := \sum_{|\bi|=m}{\| \partial_{\bi} u} \|_{p} \;\;\text{ with }\;\; \|\cdot \|_{\dot{H}^m} = \|\cdot\|_{\dot{W}^{m,2}}.
$$
\\ \indent
Throughout the paper, the notation $C$ and $C_i$, $i\ge 1$, is used for various positive numbers which may vary from line to line. These numbers depend only on the dimension $N$, and on the initial condition $u_0$ through the total mass $M$ and the quantity $\Lambda$ (see the beginning of the proof of Theorem~\ref{Hmlower}). The dependence upon additional parameters will be indicated explicitly. 
\bigskip
\\ \indent
Now we recall two classical inequalities.

\begin{lem} \label{GN} (\textit{The Gagliardo--Nirenberg Inequality,  \cite{BM19}}) 
For a $C^{\infty}$-smooth 
\\
function $v$ on $\R^N$, we have 
$$
\|v\|_{\dot{W}^{\beta,r}} \leq C \|v\|^{\theta}_{\dot{W}^{m,p}} \|v\|^{1-\theta}_{q},
$$
where $m>\beta\geq 0$, and $r$ is defined by
$$
\frac{N}{r}=\beta-\theta \Big( m-\frac{N}{p} \Big)+(1-\theta)\frac{N}{q},
$$
under the assumption $\beta/m \leq \theta < 1$ and with the exception of the case when $\beta=0$, $r=q=\infty$ and $m-N/p$ is a nonnegative integer.
\\
The constant $C$ depends also on $m,p,q,\beta,N$.
\end{lem}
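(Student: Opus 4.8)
The plan is to treat this as the classical Gagliardo--Nirenberg interpolation inequality and to recover it from frequency localization, after first pinning down the exponent relation by a scaling argument. Replacing $v$ by $v_\lambda := v(\lambda\, \cdot)$ rescales each seminorm by a pure power of $\lambda$, namely $\|v_\lambda\|_q = \lambda^{-N/q}\|v\|_q$, $\|v_\lambda\|_{\dot{W}^{m,p}} = \lambda^{m-N/p}\|v\|_{\dot{W}^{m,p}}$ and $\|v_\lambda\|_{\dot{W}^{\beta,r}} = \lambda^{\beta - N/r}\|v\|_{\dot{W}^{\beta,r}}$. For the claimed estimate to survive both limits $\lambda \to 0$ and $\lambda \to \infty$, its two sides must be homogeneous of the same degree, which forces exactly $\beta - N/r = \theta(m-N/p) - (1-\theta)N/q$, i.e.\ the stated formula for $r$. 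Thus the exponent condition is not an extra hypothesis but a consequence of dimensional analysis, and it remains only to prove the inequality for one admissible normalization.

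For the inequality itself I would use a Littlewood--Paley decomposition $v = \sum_{j\in\Z}\Delta_j v$, where $\Delta_j$ localizes to the dyadic frequency shell $|\xi|\sim 2^j$. On a single block the Bernstein inequalities give $\|\Delta_j v\|_{\dot{W}^{\beta,r}}\lesssim 2^{j\beta}\|\Delta_j v\|_r$ together with the frequency-localized embeddings $\|\Delta_j v\|_r \lesssim 2^{jN(1/p-1/r)}\|\Delta_j v\|_p$ and $\|\Delta_j v\|_r \lesssim 2^{jN(1/q-1/r)}\|\Delta_j v\|_q$, and the inverse estimate $\|\Delta_j v\|_p \lesssim 2^{-jm}\|\Delta_j v\|_{\dot{W}^{m,p}}$. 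Combining these in the two possible ways, and substituting the exponent relation to read off the powers of $2^j$, yields the two competing per-block bounds
$$
\|\Delta_j v\|_{\dot{W}^{\beta,r}} \lesssim 2^{j(1-\theta)(d-m)}\,\|v\|_{\dot{W}^{m,p}}, \qquad \|\Delta_j v\|_{\dot{W}^{\beta,r}} \lesssim 2^{j\theta(m-d)}\,\|v\|_q,
$$
where $d := N/p - N/q$.

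Because $0 \le \theta < 1$, the two exponents $(1-\theta)(d-m)$ and $\theta(m-d)$ have opposite signs whenever $d \ne m$. I would therefore split the sum $\sum_j \|\Delta_j v\|_{\dot{W}^{\beta,r}}$ at a cutoff frequency $2^{j_0}$, applying on each side the bound whose exponent makes the corresponding geometric series converge (the $\|v\|_{\dot{W}^{m,p}}$ bound on the high or low frequencies according to the sign of $d-m$, the $\|v\|_q$ bound on the complementary range). Summing the two geometric series gives $\|v\|_{\dot{W}^{\beta,r}} \lesssim 2^{j_0(1-\theta)(d-m)}\|v\|_{\dot{W}^{m,p}} + 2^{j_0\theta(m-d)}\|v\|_q$, and optimizing in $j_0$ (equivalently, balancing the two terms) produces precisely the product $\|v\|_{\dot{W}^{m,p}}^{\theta}\,\|v\|_q^{1-\theta}$; the admissibility range $\beta/m \le \theta < 1$ is exactly what is needed for the intermediate Bernstein steps to point in the right direction and for both series to be summable.

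The hard part is the endpoint analysis, which is also where the true content of the cited reference \cite{BM19} lies. The crude bound $\|v\|_{\dot{W}^{\beta,r}} \le \sum_j \|\Delta_j v\|_{\dot{W}^{\beta,r}}$ is wasteful and only clean for $1 < r < \infty$ (via the square-function characterization); the cases $p,q \in \{1,\infty\}$, the borderline $d = m$, and in particular the excluded case $\beta = 0$, $r = q = \infty$ with $m - N/p$ a nonnegative integer all fail the naive summation and must be treated separately, e.g.\ by replacing $L^\infty$ with BMO, or by the classical real-variable argument of Nirenberg (one-dimensional fundamental theorem of calculus, H\"older, and induction on $m$ and $\beta$). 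Since the lemma is only invoked here within ranges where these pathologies do not occur, in practice I would cite \cite{BM19} for the full statement and record the scaling heuristic above as the justification of the exponent relation actually used.
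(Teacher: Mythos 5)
This lemma is quoted, not proved, in the paper: it is the classical Gagliardo--Nirenberg interpolation inequality and the authors simply cite \cite{BM19}, so there is no in-paper argument to compare yours against. Your sketch is the standard Fourier-analytic proof of that classical result: the scaling computation correctly forces the exponent relation (both sides scale like $\lambda^{\beta-N/r}$ exactly when $N/r=\beta-\theta(m-N/p)+(1-\theta)N/q$), and the two per-block bounds, the split of the dyadic sum at $j_0$, and the optimization in $j_0$ are carried out correctly and do produce $\|v\|_{\dot W^{m,p}}^{\theta}\|v\|_q^{1-\theta}$ in the generic regime. One caveat you should make explicit: the Bernstein embeddings $\|\Delta_j v\|_r\lesssim 2^{jN(1/p-1/r)}\|\Delta_j v\|_p$ and $\|\Delta_j v\|_r\lesssim 2^{jN(1/q-1/r)}\|\Delta_j v\|_q$ only improve integrability upward, i.e.\ they require $r\ge p$ and $r\ge q$ respectively, whereas the lemma's hypotheses only guarantee $r\ge\min(p,q)$; the paper does invoke the lemma with $q\le r<p$ (for instance $\|u\|_{\dot W^{m,(2m+2)/(m+2)}}$ interpolated between $\dot H^{m+1}$ and $L^1$ in the bound for $E_{\bi,\bl}$), where your first per-block estimate is unavailable and one must instead interpolate $\|\Delta_j v\|_r$ between $\|\Delta_j v\|_p$ and $\|\Delta_j v\|_q$ by H\"older on each block, or argue in physical space. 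Together with the genuine endpoint issues you already flag ($p,q,r\in\{1,\infty\}$, the borderline $d=m$, and the excluded case $\beta=0$, $r=q=\infty$, $m-N/p$ a nonnegative integer), this confirms that deferring the full statement to \cite{BM19} --- which is exactly what the paper does --- is the right call; your scaling derivation of the exponent relation is a useful supplement to the citation, not a substitute for it.
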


\begin{lem} \label{HLS} (\textit{The Hardy--Littlewood--Sobolev Inequality.}) 
\\
\cite[Theorem~4.3]{Lilo01};\cite[V.1.3.]{EMSt}
\\
For a $C^{\infty}$-smooth function $v$ on $\R^N$, provided
$$
1<p,q<\infty,\ 1/p+\lambda/N=1/q+1,\ 0 < \lambda < N,
$$
we have 
$$
\Big\|\ |x|^{-\lambda} \ast v \Big\|_{q} \leq C \Vert v \Vert_{p}.
$$
where $\ast$ denotes the convolution. The constant $C$ depends on $p,\lambda,N$.
\end{lem}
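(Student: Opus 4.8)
The plan is to prove the inequality by the classical route through the Hardy--Littlewood maximal function, following the pointwise estimate of Hedberg. Writing $Tv := |x|^{-\lambda}\ast v$ and noting that $|Tv|\le T|v|$, I may assume $v\ge 0$. The relation $1/p+\lambda/N=1/q+1$ is precisely the one forced by scaling, since $|x|^{-\lambda}$ is homogeneous of degree $-\lambda$; together with $0<\lambda<N$ and $1<p<\infty$ the hypotheses guarantee both $1/q\in(0,1)$ and $\lambda p'>N$, where $p'$ is the conjugate exponent. This last inequality is exactly the convergence condition that will make the argument work, so it is reassuring that it is built into the assumptions.

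First I would split the convolution at an as-yet-undetermined radius $R>0$, writing $Tv(x)$ as the sum of a near contribution $\int_{|y|<R}|y|^{-\lambda}v(x-y)\,dy$ and a far contribution $\int_{|y|\ge R}|y|^{-\lambda}v(x-y)\,dy$. For the near part, I would decompose the ball $\{|y|<R\}$ into dyadic annuli, bound $|y|^{-\lambda}$ by its largest value on each annulus, and control the integral of $v$ over each subball by the maximal function $Mv(x)$; since $N-\lambda>0$, the resulting geometric series sums to a bound of the form $C\,R^{N-\lambda}\,Mv(x)$. For the far part, Hölder's inequality with exponents $p$ and $p'$ gives $\|v\|_p$ times $\bigl(\int_{|y|\ge R}|y|^{-\lambda p'}\,dy\bigr)^{1/p'}$, and because $\lambda p'>N$ this integral converges and equals $C\,R^{N/p'-\lambda}\|v\|_p$, with a negative power of $R$.

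Next I would optimize in $R$. Balancing the two competing powers of $R$ — the positive exponent $a=N-\lambda$ multiplying $Mv(x)$ against the negative exponent $b=N/p'-\lambda$ multiplying $\|v\|_p$ — yields the pointwise Hedberg inequality $Tv(x)\le C\,(Mv(x))^{\theta}\,\|v\|_p^{1-\theta}$. A direct check of the arithmetic gives $a-b=N/p$ and $\theta=-b/(a-b)=p/q$. Raising to the power $q$, integrating in $x$, and using $q\theta=p$ reduces the whole estimate to $\int_{\R^N}(Mv)^p\,dx\le C\|v\|_p^p$, which is the Hardy--Littlewood maximal theorem, valid precisely because $p>1$. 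The surviving powers of $\|v\|_p$ combine correctly, since $q(1-\theta)+p=q$, and one arrives at $\|Tv\|_q\le C\|v\|_p$.

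The main obstacle is not any single deep step but the bookkeeping of exponents: one must verify that the single scaling relation among $p,q,\lambda,N$ simultaneously guarantees convergence of the far integral ($\lambda p'>N$), produces the identity $\theta=p/q$ so that the maximal function enters at exactly the exponent $p$, and is compatible with the requirement $p>1$. An alternative, arguably cleaner, route avoids the optimization entirely: observe that $|x|^{-\lambda}$ belongs to the weak Lebesgue (Lorentz) space $L^{N/\lambda,\infty}(\R^N)$, since the measure of $\{|x|^{-\lambda}>s\}$ is comparable to $s^{-N/\lambda}$, and then invoke Young's convolution inequality in weak spaces, $\|f\ast g\|_q\le C\|f\|_p\,\|g\|_{N/\lambda,\infty}$, which is itself proved by splitting the kernel at a threshold depending on the level $\alpha$ and applying the Marcinkiewicz interpolation theorem to fill in the open range $1<p<\infty$. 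For a self-contained presentation I expect the maximal-function argument to be the most economical.
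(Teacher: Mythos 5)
Your proposal is correct, but note that the paper does not prove this lemma at all: it is stated as a classical result with citations to Lieb--Loss and to Stein, and the constant is only required to depend on $p,\lambda,N$, so no sharp form is needed. Your Hedberg-type argument is a complete and standard self-contained proof, and the exponent bookkeeping checks out: the scaling relation gives $\lambda/N = 1/p' + 1/q > 1/p'$ (using $q<\infty$), hence $\lambda p' > N$ and the far integral converges; $a-b = (N-\lambda)-(N/p'-\lambda) = N/p$; and $\theta = -b/(a-b) = p(\lambda/N - 1/p') = p/q \in (0,1)$, so the optimization in $R$ is legitimate and the maximal function enters at exponent $q\theta = p > 1$, exactly where the Hardy--Littlewood maximal theorem applies. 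The only points worth making explicit in a written version are the trivial edge cases ($\|v\|_p = 0$, or $Mv(x) \in \{0,\infty\}$, the latter excluded a.e.\ for $v \in L^p$) and the fact that the balancing radius $R$ may depend on $x$, which is harmless since the bound is pointwise. Your alternative via $|x|^{-\lambda} \in L^{N/\lambda,\infty}(\R^N)$ and weak-type Young/Marcinkiewicz is equally valid and is essentially the route in Stein's reference; both yield a non-sharp constant, which is all the paper uses.
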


\section{Upper estimates}
 
The results proved in this section still hold without the radial symmetry assumption on the initial condition, and also without the concentration assumption \eqref{A3}. Nevertheless, in that case we do not have corresponding lower estimates with the same power of the parameter $\eps$ proved in the next section.
\\ \indent
The scheme of the proof is very similar to that of the particular case $N=1,\ m=1$ already treated in \cite{BBKL20}.

\begin{thm} \label{Hmupper}
For $m \in \N$ and $t \geq 0$, we have
$$
\| u(t) \|_{\dot{H}^m} \leq \max\left\{ \| u_0 \|_{\dot{H}^m},\ C(m) M^{(N+2m+2)/2} \eps^{-(N+2m)/2} \right\}.
$$
\end{thm}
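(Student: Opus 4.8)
The plan is to derive a differential inequality for the (equivalent) squared seminorm $E_m:=\sum_{|\bi|=m}\|\partial_{\bi}u\|_2^2$, which satisfies $c(m)\|u\|_{\dot{H}^m}^2\le E_m\le\|u\|_{\dot{H}^m}^2$, and to close it by interpolation between $\dot{H}^{m+1}$ and $L^1$. Writing $w:=\nabla K\ast u$ and differentiating along the flow \eqref{ADE}, integration by parts in the diffusive part gives
$$\frac{1}{2}\frac{d}{dt}E_m = -\eps\sum_{|\bi|=m}\|\nabla\partial_{\bi}u\|_2^2 + \cN,\qquad \cN:=\sum_{|\bi|=m}\int_{\R^N}\nabla\partial_{\bi}u\cdot\partial_{\bi}(u\,w)\,dx,$$
where I have also integrated by parts the factor $\nabla\cdot(uw)$. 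Since $\sum_{|\bi|=m}\|\nabla\partial_{\bi}u\|_2^2\ge E_{m+1}$, the first term is bounded by $-\eps E_{m+1}$, and the whole difficulty is to absorb $\cN$ into this dissipation, up to a remainder depending only on $M=\|u_0\|_1$ and $\eps$.

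To estimate $\cN$ I would expand $\partial_{\bi}(uw)=\sum_{\bk\le\bi}\binom{\bi}{\bk}\partial_{\bk}u\,\partial_{\bi-\bk}w$ by the Leibniz rule and treat each term by Cauchy--Schwarz, extracting $\|\nabla\partial_{\bi}u\|_2\le C\|u\|_{\dot{H}^{m+1}}$ and leaving $\|\partial_{\bk}u\,\partial_{\bi-\bk}w\|_2$, which I would split by Hölder. The factor $\partial_{\bi-\bk}w$ is a convolution of a derivative of $u$ with a derivative of $\nabla K$; since $\nabla K$ is homogeneous of degree $0$, any $\partial^{\gamma}\nabla K$ with $|\gamma|\ge 1$ is controlled by $|x|^{-|\gamma|}$, so for $N\ge 2$ one places $1\le|\gamma|\le N-1$ derivatives on the kernel and applies Lemma~\ref{HLS} with $\lambda=|\gamma|\in(0,N)$, reducing $\partial_{\bi-\bk}w$ to a Lebesgue norm of a derivative of $u$. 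The single term $\bk=\bi$, with $w$ undifferentiated, is handled separately by the divergence structure $\int\nabla\partial_{\bi}u\cdot(\partial_{\bi}u)\,w=-\tfrac12\int(\Delta K\ast u)\,|\partial_{\bi}u|^2$ together with $\Delta K\ast u=-(N-1)\,|x|^{-1}\ast u$ and Lemma~\ref{HLS}. In dimension $N=1$ the Hardy--Littlewood--Sobolev step is unavailable (no admissible $\lambda<1$), but there everything is local since $\Delta K=-2\delta_0$, so $\partial^{(j)}w=-2u^{(j-1)}$ for $j\ge1$, and the estimate reduces to the computation already performed in \cite{BBKL20}. In every case the remaining Lebesgue norms of derivatives of $u$ are converted into powers of $\|u\|_{\dot{H}^{m+1}}$ and of $M$ by Lemma~\ref{GN}.

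The bookkeeping is organised by scaling. Under the mass-preserving rescaling $u\mapsto\lambda^N u(\lambda\,\cdot)$ one has $M\mapsto M$, $\|u\|_{\dot{H}^s}\mapsto\lambda^{(N+2s)/2}\|u\|_{\dot{H}^s}$, and $\cN\mapsto\lambda^{N+2m+1}\cN$, so homogeneity (together with cubic homogeneity in the amplitude of $u$) forces every term to obey
$$\cN\le C(m)\,M^{(N+2m+4)/(N+2m+2)}\,\|u\|_{\dot{H}^{m+1}}^{\,a},\qquad a:=\frac{2(N+2m+1)}{N+2m+2}<2.$$
Because $a<2$, Young's inequality yields $\cN\le\tfrac{\eps}{2}E_{m+1}+R$ with $R=C(m)\,M^{N+2m+4}\eps^{-(N+2m+1)}$, whence $\tfrac12\frac{d}{dt}E_m\le-\tfrac{\eps}{2}E_{m+1}+R$. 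I would then apply Lemma~\ref{GN} in the form $\|u\|_{\dot{H}^{m+1}}\ge C\,\|u\|_{\dot{H}^m}^{1/\theta}M^{-(1-\theta)/\theta}$ with $\theta=(2m+N)/(2m+N+2)$ (which lies in the admissible range $m/(m+1)\le\theta<1$), so that $E_{m+1}\ge C\,E_m^{1/\theta}M^{-2(1-\theta)/\theta}$. Substituting this lower bound shows that the right-hand side is negative as soon as $\|u\|_{\dot{H}^m}$ exceeds $C(m)\,M^{(N+2m+2)/2}\eps^{-(N+2m)/2}$; the matching of the exponents of $M$ and $\eps$ with the statement follows from the identities $\theta(N+2m+2)/2=(2m+N)/2$ and $\theta(N+2m+4)/2+(1-\theta)=(2m+N+2)/2$. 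A standard barrier (comparison) argument for this differential inequality then gives the claimed bound $\|u(t)\|_{\dot{H}^m}\le\max\{\|u_0\|_{\dot{H}^m},\,C(m)M^{(N+2m+2)/2}\eps^{-(N+2m)/2}\}$ for all $t\ge 0$.

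The main obstacle is the estimate of $\cN$: one must verify that, for every splitting $\bk\le\bi$ and every admissible number of derivatives moved onto the kernel, the exponents required by Lemma~\ref{HLS} (namely $1<p,q<\infty$ and $0<\lambda<N$) and by Lemma~\ref{GN} (namely $\beta/m\le\theta<1$, avoiding the excluded endpoint) can be met simultaneously, and that the resulting power $a$ of $\|u\|_{\dot{H}^{m+1}}$ stays strictly below $2$ so that Young's inequality applies. Checking that these admissibility windows are nonempty for all $0\le|\bk|\le m$ and all $N\ge 2$ is the delicate analytic core of the proof; the scaling identities above guarantee a posteriori that, if the estimates go through at all, they produce exactly the power of $\eps$ and of $M$ appearing in the statement.
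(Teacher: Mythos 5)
Your proposal follows essentially the same route as the paper: an energy identity for $\sum_{|\bi|=m}\|\partial_{\bi}u\|_2^2$, Leibniz expansion of the nonlinearity, the divergence trick $\int\nabla\partial_{\bi}u\cdot(\partial_{\bi}u)\,w=-\tfrac12\int(\Delta K\ast u)|\partial_{\bi}u|^2$ for the top-order term, redistribution of derivatives onto the kernel so that Hardy--Littlewood--Sobolev applies with $0<\lambda<N$ (with the purely local identity $(K'\ast v)_x=-2v$ replacing it when $N=1$), Gagliardo--Nirenberg to reduce everything to $M$ and $\|u\|_{\dot H^{m+1}}$, and a barrier argument driven by the interpolation $\|u\|_{\dot H^{m+1}}\gtrsim M^{-(1-\theta)/\theta}\|u\|_{\dot H^m}^{1/\theta}$ with $\theta=(2m+N)/(2m+N+2)$. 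Your exponents $a=2(N+2m+1)/(N+2m+2)$ and $M^{(N+2m+4)/(N+2m+2)}$ agree with the paper's, and your Young-inequality absorption of $\cN$ into half the dissipation is a cosmetic variant of the paper's factoring of $\|u\|_{\dot H^{m+1}}^{a}$ out of the right-hand side; both yield the same threshold.

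The one caveat is that you derive the bound on $\cN$ from scaling and homogeneity and explicitly defer the verification that every Leibniz term admits a simultaneous choice of H\"older, Hardy--Littlewood--Sobolev and Gagliardo--Nirenberg exponents in the admissible ranges. That verification is not a formality: it is the bulk of the paper's proof for $N\ge 2$, carried out by splitting the sum according to whether $|\bl|\ge m-N+2$ or $|\bl|\le m-N+1$ (so that either all of $\partial_{\bi-\bl}$ or exactly $N-2$ derivatives are moved onto $P(\bi,\bl)K$) and exhibiting explicit exponents such as $2N/(N-m+|\bl|)$, $4N/(2m-2|\bl|+1)$ and $(4m+4)/(2m-1)$ for which all three lemmas apply. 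Scaling fixes the exponents a posteriori, as you say, but it does not guarantee the windows are nonempty, so as written your argument is a correct and well-targeted outline rather than a complete proof.
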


\begin{proof}
The case $m=0$ is dealt with in \cite{BBKL20}, to which we refer, see \cite[Lemma~4.1]{BBKL20} for $p=2$. From now on, we assume that $m \ge 1$.
\\
{\bf The case $N=1$.} Integrating by parts and using that for any $p \in [1,\infty]$, $(K' \ast v)_x=-2v$ for $v \in L^p(\mathbb{R}^N)$, we obtain
\begin{align*}
\frac{1}{2} \frac{\mathrm{d}}{\mathrm{d}t} \Vert u \Vert^2_{\dot{H}^m} 
\\ \nonumber
=& -\eps \Vert u \Vert^2_{\dot{H}^{m+1}}
-\int_{\R}{u^{(m)}(u\ (K' \ast u))^{(m+1)} dx} 
\\ \nonumber
= & -\eps \Vert u \Vert^2_{\dot{H}^{m+1}} - \int_{\R}{u^{(m)} u^{(m+1)} (K' \ast u) dx} 
\\ \nonumber
&- 
\sum_{k=0}^{m} \int_{\R}{\binom{m+1}{k} u^{(m)} u^{(k)} (K' \ast u^{(m-k)})_x dx}
\\ \nonumber
=&-\eps \Vert u \Vert^2_{\dot{H}^{m+1}}+\frac{1}{2} \int_{\R}{(u^{(m)})^2 (K' \ast u)_x dx}
\\ \nonumber
&- \sum_{k=0}^{m}  \int_{\R}{\binom{m+1}{k} u^{(m)} u^{(k)} (K' \ast u^{(m-k)})_x dx}
\\ \nonumber
=&-\eps \Vert u \Vert^2_{\dot{H}^{m+1}}-
\underbrace{\int_{\R}{(u^{(m)})^2\ u\ dx}}_{A_{m}} 
\\ \nonumber
&+ \sum_{k=0}^{m}  \underbrace{\int_{\R}{2 \binom{m+1}{k} u^{(m)} u^{(k)} u^{(m-k)} dx}}_{B_{km}}.
\end{align*}
We first get, using the H{\"o}lder and then the Gagliardo--Nirenberg inequalities, as well as \eqref{M},
\begin{align*}
|A_{m}| & \leq \Vert u \Vert_{\dot{W}^{m,\infty}}^2 \Vert u \Vert_1 
\leq C(m) (\Vert u \Vert_1^{1/(2m+3)} \Vert u \Vert_{\dot{H}^{m+1}}^{(2m+2)/(2m+3)})^2 \Vert u \Vert_1
\\ 
& = C(m) M^{(2m+5)/(2m+3)} \Vert u \Vert^{(4m+4)/(2m+3)}_{\dot{H}^{m+1}}.
\end{align*}
Similarly, we obtain
\begin{align*}
|B_{km}| \leq & C(k,m) \Vert u \Vert_{\dot{W}^{m,\infty}} \Vert u \Vert_{\dot{H}^k} \Vert u \Vert_{\dot{H}^{m-k}}
\\ \indent
 \leq & C(k,m) \left( \Vert u \Vert_1^{1/(2m+3)} \Vert u \Vert_{\dot{H}^{m+1}}^{(2m+2)/(2m+3)} \right) 
\\ \indent
& \qquad\qquad \times \left( \Vert u \Vert_1^{(2m+2-2k)/(2m+3)} \Vert u \Vert_{\dot{H}^{m+1}}^{(2k+1)/(2m+3)} \right) 
\\ \indent
& \qquad\qquad \times \left( \Vert u \Vert_1^{(2k+2)/(2m+3)} \Vert u \Vert_{\dot{H}^{m+1}}^{(2m-2k+1)/(2m+3)} \right)
\\ \indent
= & C(k,m) M^{(2m+5)/(2m+3)} \Vert u \Vert^{(4m+4)/(2m+3)}_{\dot{H}^{m+1}}.
\end{align*}
Consequently,
\begin{align} \label{decr}
& \frac{1}{2} \frac{\mathrm{d}}{\mathrm{d}t} \|u\|_{\dot{H}^m}^2 \leq - \eps \|u\|_{\dot{H}^{m+1}}^2 
+ C(m) M^{(2m+5)/(2m+3)} \Vert u \Vert^{(4m+4)/(2m+3)}_{\dot{H}^{m+1}}.
\end{align}
Now we observe that, interpolating $\|u(t)\|_{\dot{H}^m}$ between $\|u(t)\|_{\dot{H}^{m+1}}$ and $\|u(t)\|_{1}$ using the Gagliardo--Nirenberg inequality, we get, thanks to \eqref{M},
\begin{equation} \label{PhL200}
\|u(t)\|^{2/(2m+3)}_{\dot{H}^{m+1}} \ge C_{GN}(m) M^{-4/(2m+1)(2m+3)} \|u(t)\|^{2/(2m+1)}_{\dot{H}^{m}}.
\end{equation}
Our goal is now to show that the inequality \eqref{decr} implies that, for all $t \ge 0$,
\begin{align} \label{PhL202}
&\|u(t)\|_{\dot{H}^m} \leq U_m 
\\ \nonumber
&\equiv \max\left\{ \|u_0\|_{\dot{H}^m},\ C_{GN}(m)^{-(2m+1)/2} C(m)^{(2m+1)/2} M^{(2m+3)/2} \eps^{-(2m+1)/2} \right\}, 
\end{align}
with $C(m)$ is the same as in \eqref{decr} and $C_{GN}(m)$ the same as in \eqref{PhL200}. Indeed, for $\delta>0$, consider the set 
$$
A_{\delta} :=\left\{ t \geq 0:\, \|u(t)\|_{\dot{H}^m} \leq U_m+\delta \right\}.
$$
Clearly, $0 \in A_{\delta}$ and the time continuity of $u$ in $H^m(\mathbb{R}^N)$ ensures that 
$$
\tau_\delta := \sup\{ t\ge 0:\, [0,t]\subset A_\delta \}\in (0,\infty].
$$
Assume now for contradiction that $\tau_\delta<\infty$. The definition of $\tau_\delta$ implies that 
\begin{equation*}
\|u(\tau_\delta)\|_{\dot{H}^m}^2 = (U_m + \delta)^2 \ge \|u(t)\|_{\dot{H}^m}^2 \ \ \ {\rm for\ all\ \ }t\in (0,\tau_\delta). 
\end{equation*}
Hence,
\begin{equation}
\frac{\rm d}{\dt} \|u(\tau_\delta)\|_{\dot{H}^m}^2 \ge 0. \label{PhL201}
\end{equation}
We next infer from \eqref{decr}, \eqref{PhL200} and the definition of $U_m$ that 
\begin{align*}
& \frac{1}{2} \frac{\rm d}{\dt} \|u(\tau_\delta)\|_{\dot{H}^m}^2  
\\
\le & 
\eps \|u(\tau_\delta)\|_{\dot{H}^{m+1}}^{(4m+4)/(2m+3)} \left( - \|u(\tau_\delta)\|_{\dot{H}^{m+1}}^{2/(2m+3)} + C(m) M^{(2m+5)/(2m+3)} \eps^{-1} \right)
\\ 
\le & \eps \|u(\tau_\delta)\|_{\dot{H}^{m+1}}^{(4m+4)/(2m+3)} 
\big( - C_{GN}(m) M^{-4/(2m+1)(2m+3)} \|u(\tau_\delta)\|_{\dot{H}^{m}}^{2/(2m+1)} \big.
\\
& \big. + C(m) M^{(2m+5)/(2m+3)} \varepsilon^{-1} \big) < 0,
\end{align*}
which contradicts \eqref{PhL201}. Consequently, $\tau_\delta=\infty$ and $A_{\delta}=[0,\infty)$ for all $\delta>0$. Letting $\delta\to 0$ completes the proof of \eqref{PhL202}.
\\
\medskip
\indent
{\bf The case $N \geq 2$.} Let $\bi = (i_k)_{1\le k\le N} \in \mathbb{N}^N$ be a multiindex with $|\bi|=m$. By applying Leibniz' formula, it follows from \eqref{ADE} that $\partial_{\bi} u$ solves
\begin{equation*}
\left( \partial_{\bi} u \right)_t = \eps \Delta \left( \partial_{\bi} u \right) - \nabla\cdot \Big[ \sum_{0 \leq \bj \leq \bi} \binom{\bi}{\bj} \partial_{\bj} u\ (\nabla K* \partial_{\bi-\bj} u) \Big].
\end{equation*}
Multiplying the above equation by $\partial_{\bi} u$, integrating over $\mathbb{R}^N$, summing over all multiindices $\bi$ of length $m$ and then integrating by parts, we get 
\begin{align*}
\frac{1}{2} \frac{\mathrm{d}}{\mathrm{d}t} \|u\|_{\dot{H}^m}^2 
\\
= &- \eps \sum_{|\bi|=m} \| \nabla (\partial_{\bi} u) \|_{2}^2 
\\
&+ \sum_{|\bi|=m} \sum_{0 \leq \bj \leq \bi} \binom{\bi}{\bj} \int_{\mathbb{R}^N} \partial_{\bj} u \left( \nabla K* \partial_{\bi-\bj}  u \right) \cdot \nabla \partial_{\bi} u \dx
\\
= &- \eps \|u\|_{\dot{H}^{m+1}}^2 + \sum_{|\bi|=m} \int_{\mathbb{R}^N} \partial_{\bi} u \left( \nabla K* u \right) \cdot \nabla \partial_{\bi} u \dx
\\
& - \sum_{|\bi|=m} \sum_{0 \leq \bj < \bi} \binom{\bi}{\bj} \int_{\mathbb{R}^N} \nabla \partial_{\bj} u \cdot \left( \nabla K* \partial_{\bi-\bj} u \right) \partial_{\bi} u \dx
\\
& - \sum_{|\bi|=m} \sum_{0 \leq \bj < \bi} \binom{\bi}{\bj} \int_{\mathbb{R}^N} \partial_{\bj} u \left( \Delta K* \partial_{\bi-\bj} u \right) \partial_{\bi} u \dx,
\end{align*}
using that $\mathrm{div}(\nabla K * \partial_{\bi-\bj}u) = \Delta K * \partial_{\bi-\bj}u$. Since we can write
\begin{align*}
& \sum_{0\le\bj < \bi} \binom{\bi}{\bj} \int_{\mathbb{R}^N} \nabla \partial_{\bj} u \cdot \left( \nabla K* \partial_{\bi-\bj} u \right) \partial_{\bi} u \dx \\
= & \sum_{0\le\bj < \bi} \binom{\bi}{\bj} \sum_{k=1}^N \int_{\mathbb{R}^N} \partial_{x_k} \partial_{\bj} u \left( \partial_{x_k} K* \partial_{\bi-\bj} u \right) \partial_{\bi} u \dx \\
= & \sum_{|\bl|\le m} \sum_{r=1}^N \sum_{s=1}^N C_1(\bi,\bl,r,s) \int_{\mathbb{R}^N} \partial_{\bl} u \left( \partial_{x_r} \partial_{x_s} K* \partial_{\bi-\bl} u \right) \partial_{\bi} u \dx
\end{align*}
for some constants $C_1(\bi,\bl,r,s)\in \mathbb{R}$ and 
\begin{align*}
& \sum_{0 \leq \bj < \bi} \binom{\bi}{\bj} \int_{\mathbb{R}^N} \partial_{\bj} u \left( \Delta K* \partial_{\bi-\bj} u \right) \partial_{\bi} u \dx \\
= & \sum_{|\bl| \le m} \sum_{r=1}^N  C_2(\bi,\bj,r) \int_{\mathbb{R}^N} \partial_{\bl} u \left( \partial_{x_r}^2 K* \partial_{\bi-\bl} u \right) \partial_{\bi} u \dx
\end{align*}
for some constants $C_2(\bi,\bj,r)\in \mathbb{R}$, we obtain, after another integration by parts,
\begin{equation}
\begin{split}
\frac{1}{2} \frac{\mathrm{d}}{\mathrm{d}t} \|u\|_{\dot{H}^m}^2 = &- \eps \|u\|_{\dot{H}^{m+1}}^2 - \frac{1}{2} \sum_{|\bi|=m} \int_{\mathbb{R}^N} \left( \partial_{\bi} u\right)^2 \left( \Delta K* u \right) \dx \\
& + \sum_{|\bi|=m} \sum_{|\bl|\le m} \int_{\mathbb{R}^N} \partial_{\bl} u \left( P(\bi,\bl)K * \partial_{\bi-\bl} u \right) \partial_{\bi} u \dx,
\end{split} \label{sph1}
\end{equation}
where $P(\bi,\bl)$ are constant-coefficient differential operators of second order. We now split the last term of \eqref{sph1} to find, after moving partial derivatives inside the convolutions in an appropriate way, 
\begin{align*}
& \sum_{|\bi|=m} \sum_{|\bl|\le m} \int_{\mathbb{R}^N} \partial_{\bl} u \left( P(\bi,\bl)K * \partial_{\bi-\bl} u \right) \partial_{\bi} u \dx \\
= & \sum_{|\bi|=m}\quad \sum_{m-N+2\le |\bl|\le m} \int_{\mathbb{R}^N} \partial_{\bl} u \left( \partial_{\bi-\bl} P(\bi,\bl)K * u \right) \partial_{\bi} u \dx \\
& + \sum_{|\bi|=m}\quad \sum_{|\bl|\le m-N+1} \int_{\mathbb{R}^N} \partial_{\bl} u \left( P(\bi,\bl)K * \partial_{\bi-\bl} u \right) \partial_{\bi} u \dx.
\end{align*}
For the second term on the right hand side of the above identity, we observe that the conditions $|\bi|=m$ and $|\bl|\le m-N+1$ guarantee that $|\bi-\bl|\ge N-1$ and we can move $N-2$ partial derivatives from $\partial_{\bi-\bl} u$ on $P(\bi,\bl)K$ in the convolution $P(\bi,\bl)K * \partial_{\bi-\bl} u$ to find
\begin{align*}
& \sum_{|\bi|=m} \sum_{|\bl|\le m} \int_{\mathbb{R}^N} \partial_{\bl} u \left( P(\bi,\bl)K * \partial_{\bi-\bl} \right) \partial_{\bi} u \dx \\
= & \sum_{|\bi|=m}\quad \sum_{m-N+2\le |\bl|\le m} \int_{\mathbb{R}^N} \partial_{\bl} u \left( \partial_{\bi-\bl} P(\bi,\bl)K * u \right) \partial_{\bi} u \dx \\
& + \sum_{|\bi|=m}\quad \sum_{|\bl|\le m-N+1}\quad \sum_{|\bj|=m-|\bl|-N+2} \int_{\mathbb{R}^N} \partial_{\bl} u \left( Q(\bi,\bl,\bj)K * \partial_{\bj} u \right) \partial_{\bi} u \dx,
\end{align*}
where $Q(\bi,\bl,\bj)$ are constant-coefficient differential operators of order $N$. Inserting the above identity in \eqref{sph1} and computing the partial derivatives of $K$ leads to

\begin{align*}
& \frac{1}{2} \frac{\mathrm{d}}{\mathrm{d}t} \|u\|_{\dot{H}^m}^2 \leq - \eps \|u\|_{\dot{H}^{m+1}}^2 
+ \frac{ N-1}{2} \sum_{|\bi|=m} \underbrace{\int_{\mathbb{R}^N} (\partial_{\bi} u)^2 \left( |x|^{-1}* u\right)  \dx}_{A_\bi}
\\
& +  \sum_{|\bi|=m}\quad \sum_{ m-N+2 \le |\bl| \le m}\  C(\bi,\bl) \underbrace{\int_{\mathbb{R}^N} 
|\partial_{\bl} u| \left( 
|x|^{-(m- |\bl|+1)}*  u \right) |\partial_{\bi} u| \dx}_{D_{\bi,\bl}}
\\
& + \sum_{|\bi|=m}\ \sum_{|\bl|\le m-N+1} C(\bi,\bl) \underbrace{ \sum_{|\bj|=m-|\bl|-N+2} \int_{\mathbb{R}^N} 
|\partial_{\bl} u| \left( |x|^{-(N-1)} * |\partial_{\bj} u| \right) |\partial_{\bi} u| \dx}_{E_{\bi,\bl}}.
\end{align*}
Now it remains to estimate all the terms using first the H{\"o}lder, and then the Gagliardo--Nirenberg and the Hardy--Littlewood--Sobolev inequalities, along with the mass conservation \eqref{M}. First,
\begin{align*}
|A_\bi| \leq  &\Vert (\partial_{\bi} u)^2 \Vert_{2N/(2N-1)} \left\Vert |x|^{-1}* u \right\Vert_{2N} \leq C(m) \Vert \partial_{\bi} u\Vert^2_{4N/(2N-1)} \Vert  u \Vert_{2N/(2N-1)}
\\
\leq & C(m) \Vert u \Vert^2_{\dot{W}^{m,4N/(2N-1)}} \Vert u \Vert_{2N/(2N-1)}
\\
\leq & C(m) \Big(\Vert u\Vert_1^{3/(2m+N+2)} \Vert u \Vert^{(4m+2N+1)/(2m+N+2)}_{\dot{H}^{m+1}}\Big) 
\\
& \times \Big(\Vert u \Vert_1^{(2m+N+1)/(2m+N+2)} \Vert u \Vert^{1/(2m+N+2)}_{\dot{H}^{m+1}}\Big) 
\\
\leq & C(m) M^{(2m+N+4)/(2m+N+2)} \Vert u \Vert^{(4m+2N+2)/(2m+N+2)}_{\dot{H}^{m+1}}.
\end{align*}
Next, when $\bl$ satisfies $0\le m-|\bl| \le N-2$,
\begin{align*}
|D_{\bi, \bl}|  \leq &\  \Vert \partial_{\bl} u \Vert_{2N/(N-m+|\bl|)}
\left\Vert |x|^{-(m-|\bl|+1)} *  u \right\Vert_{4N/(2m-2|\bl|+1)} \Vert \partial_{\bi} u \Vert_{4N/(2N-1)}
\\
\leq &\ C(\bi,\bl) \Vert u \Vert_{\dot{W}^{|\bl|,\ 2N/(N-m+L)}} \Vert u \Vert_{4N/(4N-2(m-|\bl|)-3)} \Vert u \Vert_{\dot{W}^{m,4N/(2N-1)}}
\\
\leq &\ C(\bi,\bl) (\Vert u \Vert_{1}^{1-\alpha} \Vert u \Vert_{\dot{H}^{m+1}}^{\alpha} )
									 (\Vert u \Vert_{1}^{1-\beta} \Vert u \Vert_{\dot{H}^{m+1}}^{\beta} )
									 (\Vert u \Vert_{1}^{1-\gamma} \Vert u \Vert_{\dot{H}^{m+1}}^{\gamma} )
\\
\leq &\ C(\bi,\bl) M^{(2m+N+4)/(2m+N+2)} \Vert u \Vert^{(4m+2N+2)/(2m+N+2)}_{\dot{H}^{m+1}},
\end{align*}
where
$$
\alpha={\frac{m+|\bl|+N}{2m+N+2}},\quad \beta=\frac{2(m-|\bl|)+3}{2(2m+N+2)},\quad \gamma={ \frac{4m+2N+1}{2(2m+N+2)}.}
$$
Finally, when $(\bl,\bj)$ satisfies $m-|\bl|\ge N-1$ and $|\bj|=m-|\bl|-N+2$, 
\begin{align*}
|E_{\bi,\bl}|  \leq &\ \Vert \partial_{\bl} u \Vert_{(4m+4)/(2m-1)}
\left\Vert |x|^{-(N-1)} * |\partial_{\bj} u| \right\Vert_{4m+4} \Vert \partial_{\bi} u \Vert_{(2m+2)/(m+2)}
\\
\leq &\ C(\bi,\bl) \Vert u \Vert_{\dot{W}^{|\bl|,(4m+4)/(2m-1)}} \Vert u \Vert_{\dot{W}^{m-|\bl|-N+2,\ N(4m+4)/(N+4m+4)}} 
\\ 
& \times \Vert u \Vert_{\dot{W}^{m,(2m+2)/(m+2)}}
\\
\leq &\ C(\bi,\bl) (\Vert u \Vert_{1}^{1-\delta} \Vert u \Vert_{\dot{H}^{m+1}}^{\delta} )
									 (\Vert u \Vert_{1}^{(1-\delta')} \Vert u \Vert_{\dot{H}^{m+1}}^{\delta'} )
\\
& \times(\Vert u \Vert_{1}^{1/(m+1)} \Vert u \Vert_{\dot{H}^{m+1}}^{m/(m+1)} )
\\
\leq &\ C(\bi,\bl) M^{(2m+N+4)/(2m+N+2)} \Vert u \Vert^{(4m+2N+2)/(2m+N+2)}_{\dot{H}^{m+1}},
\end{align*}
where
$$
\delta=\frac{2(m+1)(2L+N)+3N}{2(2m+N+2)(m+1)},\quad \delta'=\frac{4(m+1)(m-L+1)-N}{2(2m+N+2)(m+1)}.
$$
Summing up all the above estimates, we get 
\begin{align*}
&\frac{1}{2} \frac{\mathrm{d}}{\mathrm{d}t} \|u\|_{\dot{H}^m}^2 \\
\leq &  - \eps \|u\|_{\dot{H}^{m+1}}^2 + C(m) M^{(2m+N+4)/(2m+N+2)} \Vert u \Vert^{(4m+2N+2)/(2m+N+2)}_{\dot{H}^{m+1}} \\
=&   \|u\|_{\dot{H}^{m+1}}^{(4m+2N+2)/(2m+N+2)} \\
& \times \left( C(m) M^{(2m+N+4)/(2m+N+2)} - \varepsilon \|u\|_{\dot{H}^{m+1}}^{2/(2m+N+2)} \right). 
\end{align*}
From this energy inequality combined with the following consequence of
\\
the Gagliardo--Nirenberg inequality
\begin{equation*}
\|u\|_{\dot{H}^{m+1}}^{2/(2m+N+2)} \ge C(m)\ M^{-4/(2m+N)(2m+N+2)} \|u\|_{\dot{H}^m}^{2/(2m+N)}
\end{equation*}
we deduce, arguing as in the case $N=1$, that
\begin{equation*}
\|u(t)\|_{\dot{H}^m} \le \max\left\{ \|u_0\|_{\dot{H}^m} , C(m) M^{(N+2m+2)/2} \eps^{-(N+2m)/2} \right\}, \qquad t\ge 0,
\end{equation*}
as announced.
\end{proof}

\begin{rmk}
After any given time $\tau>0$, the estimates above will hold with a $\tau$-dependent upper bound as a consequence of an interplay between the smoothing properties of the heat kernel and a genuinely nonlinear effect. Moreover, this upper bound will only depend on $u_0$ through the single quantity $M$. To obtain this result, one uses the same method as for the $u_0$-uniform upper estimates for the $H^m$-norms given by \cite[Lemma~53]{B14-ARMA} for solutions of the Burgers equation.
\end{rmk}

%

\section{Lower estimates for Sobolev norms}

Here --- unlike in the previous section --- the positivity and radial symmetry assumptions \eqref{A2} as well as the concentration assumption \eqref{A3} on the initial condition $u_0$ play a crucial role.

\begin{thm} \label{Hmlower}
Let $m\in \N$. For some explicit numbers  $\eps_\ast>0$,  $T_\ast>0$ and $C_\ast(m)>0$, independent of $\eps$, 
the following inequality holds true:
\be
\int_0^{T_\ast} \Vert u \Vert_{\dot{H}^{m}} \dt\ge C_\ast(m) \eps^{-(2m+N)/2},
\qquad 
\text{for all} \quad  \eps\in(0,\eps_\ast). 
\label{concen}
\ee
\end{thm}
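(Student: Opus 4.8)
The plan is to derive the lower bound not from scratch but by transferring the already-available lower bound on the time-averaged $L^2$ norm to the homogeneous Sobolev norm, using Gagliardo--Nirenberg interpolation together with a single application of H\"older's inequality in time. The genuinely nonlinear input --- that the mass concentrates on the scale $\eps$, so that $\int_0^{T_\ast}\|u(t)\|_2\,dt \ge c\,\eps^{-N/2}$ --- will not be re-proved here but quoted from \cite{BBKL20} (the case $p=2$ of \eqref{Lpstat}); this is precisely where the positivity and radial symmetry assumptions \eqref{A2} and the concentration assumption \eqref{A3} enter, through the parameter $\Lambda$. Accordingly I would open the proof by fixing $\eps_\ast$, $T_\ast$ and $\Lambda$ to be exactly those of \eqref{Lpstat}, and recording this $L^2$ lower bound as the starting point.

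For the interpolation step, fix $m\in\N$ and apply Lemma~\ref{GN} to the smooth function $u(\cdot,t)$ with the choice $\beta=0$, $r=2$, $p=2$, $q=1$, which yields
\begin{equation*}
\|u(t)\|_{2} \le C(m)\,\|u(t)\|_{\dot{H}^m}^{\theta}\,\|u(t)\|_{1}^{1-\theta}, \qquad \theta=\frac{N}{2m+N}\in(0,1).
\end{equation*}
One must check admissibility: the exponent relation $N/r=\beta-\theta(m-N/p)+(1-\theta)N/q$ does force $\theta=N/(2m+N)$ for the stated $p,q$, one has $\beta/m=0\le\theta<1$ for every $m\ge1$, and the excluded endpoint ($\beta=0$, $r=q=\infty$) does not occur since $r=2$ and $q=1$ are finite. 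Using mass conservation \eqref{M} to replace $\|u(t)\|_1$ by $M$ turns this into the pointwise-in-time bound $\|u(t)\|_2\le C(m)M^{1-\theta}\|u(t)\|_{\dot{H}^m}^{\theta}$.

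I would then integrate over $[0,T_\ast]$ and estimate the right-hand side by H\"older's inequality in time with conjugate exponents $1/\theta$ and $1/(1-\theta)$,
\begin{equation*}
\int_0^{T_\ast}\|u(t)\|_{\dot{H}^m}^{\theta}\,dt \le \Big(\int_0^{T_\ast}\|u(t)\|_{\dot{H}^m}\,dt\Big)^{\theta}\, T_\ast^{1-\theta}.
\end{equation*}
Chaining the three displays with the quoted $L^2$ lower bound gives
\begin{equation*}
c\,\eps^{-N/2}\le \int_0^{T_\ast}\|u(t)\|_2\,dt \le C(m)\,M^{1-\theta}\,T_\ast^{1-\theta}\,\Big(\int_0^{T_\ast}\|u(t)\|_{\dot{H}^m}\,dt\Big)^{\theta}.
\end{equation*}
Raising to the power $1/\theta$ and using $N/(2\theta)=(2m+N)/2$ then produces \eqref{concen} with an explicit $C_\ast(m)$ depending only on $m$, $N$, $M$, $T_\ast$ and the Gagliardo--Nirenberg constant; the resulting power of $\eps$ is exactly $-(2m+N)/2$, matching the upper bound of Theorem~\ref{Hmupper}.

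The only real subtlety in this section is the bookkeeping of exponents and constants through the $1/\theta$-th root; I expect no essential obstacle here, since all the hard analysis is already packaged in the cited $L^2$ estimate from \cite{BBKL20}, and the upper bound of Theorem~\ref{Hmupper} is not even needed for this route. If a self-contained argument were preferred, the alternative would be to re-derive the $L^2$ lower bound directly from a first-moment (virial) estimate as in \cite{BKL09}, controlling $\int_{\R^N}|x|\,u\,dx$ to show that an $\eps$-independent fraction of the mass occupies a ball of radius $\sim\eps$ over a time interval of fixed length; in that version the moment estimate, rather than the interpolation, would be the main difficulty.
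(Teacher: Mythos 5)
Your proposal is correct and follows essentially the same route as the paper: quote the $m=0$ (i.e.\ $p=2$) lower bound from \cite{BBKL20}, interpolate $\|u\|_2$ between $\|u\|_1=M$ and $\|u\|_{\dot{H}^m}$ via Gagliardo--Nirenberg, and finish with H\"older in time. The only (cosmetic) difference is that you apply H\"older so as to bound $\int_0^{T_*}\|u\|_{\dot{H}^m}\,dt$ directly, whereas the paper's displayed chain runs through $\bigl(\int_0^{T_*}\|u\|_{\dot{H}^m}^2\,dt\bigr)^{1/2}$; your arrangement actually matches the quantity in the statement more cleanly.
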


\begin{proof}
For $m=0$, see \cite[Corollary~2.4]{BBKL20} for $p=2$. Indeed, if \eqref{u0conc} is true, then there exists $\Lambda>0$ such that the assumption on the initial condition \cite[Eq.~(2.8)]{BBKL20} holds true; see also \cite[Remark 2.7]{BBKL20}.
\\
For $m\ge 1$, we infer from the Gagliardo--Nirenberg inequality that
\begin{align*}
\|u(t)\|_2 \le C_{GN}(m) \|u(t)\|_{\dot{H}^m}^{N/(N+2m)} \|u(t)\|_1^{2m/(N+2m)}.
\end{align*}
Hence, by \eqref{M},
\begin{equation*}
\|u(t)\|_{\dot{H}^m} \ge C_{GN}(m)^{-(N+2m)/N} M^{-2m/N}\ \|u(t)\|_2^{(N+2m)/N}, \qquad t\ge 0,
\end{equation*}
and it follows from H\"older's inequality and the already established lower bound \eqref{concen} for $m=0$ that
\begin{align*}
& \left( \int_0^{T_{*}} \|u(t)\|_{\dot{H}^m}^2 \dt \right)^{1/2} \ge C(m) M^{-2m/N}\ \left( \int_0^{T_{*}} \|u(t)\|_2^{2(N+2m)/N} \dt \right)^{1/2} 
\\
& \ge C(m) M^{-2m/N}\ T_{*}^{-(N+4m)/2N} \left( \int_0^{T_{*}} \|u(t)\|_2 \dt \right)^{(N+2m)/N} \\
 & \ge C_{*}^{(N+2m)/N} C(m) M^{-2m/N}\ T_{*}^{ -(N+4m)/2N} \eps^{-(N+2m)/2},
\end{align*}
which completes the proof.
\end{proof}

\section*{Acknowledgements}
This work was partially supported by the French-Polish PHC Polonium grant 40592NJ (for all four authors), and NCN 2016/23/B/ST1/00434 (for the first author). 
\\

\bibliographystyle{plain}
\bibliography{BBKL-biblio}

\end{document}